\theoremstyle{plain}
\newtheorem{thm}{Theorem}[section]
\newtheorem{prop}[thm]{Proposition}
\newtheorem{lem}[thm]{Lemma}
\theoremstyle{definition}
\newtheorem{remark}[thm]{Remark}
\newcommand{\mC}{\mathbb{C}}
\newcommand{\mP}{\mathbb{P}}
\newcommand{\mZ}{\mathbb{Z}}
\newcommand*{\de}{\partial}
\newcommand{\zfg}{Z_{f,g}}
\title{On the generic injectivity of Hessian maps \\
of ternary forms}
\author[Valentina Beorchia]{Valentina Beorchia$^{\circ}$}
\address[\textsc{Valentina Beorchia}]{University of Trieste,
Department of Mathematics, Informatics and Geosciences,
Via Valerio 12/1, 34127 Trieste, Italy; ORCID 0000-0003-3681-9045}
\email{beorchia@units.it}
\thanks{$^{\circ}$ The author is a member of ``Gruppo Nazionale per le Strutture Algebriche, Geometriche e le loro Applicazioni'', INdAM. She is partially supported by MUR funds: PRIN project GEOMETRY OF ALGEBRAIC STRUCTURES: MODULI, INVARIANTS, DEFORMATIONS, PI Ugo Bruzzo, Project code: 2022BTA242, and by the University of Trieste project FRA 2025. }
\thanks{{\bf 2020 Mathematics Subject Classification}{Primary: 14E05, 14H50,
14E20; Secondary: 14C15, 14C17, 14C20, 14C21, 14C25}}
\thanks{{\bf Keywords}. Plane curve, Hessian curve, birational map, Polar map, Chow ring,  ramification divisor, Jacobian linear system}
\begin{document}

\begin{abstract}
    We study the problem of the generic injectivity of the
    Hessian map, associating with a proportionality class of a ternary form the class of its Hessian determinant, conjectured
    by C. Ciliberto and G. Ottaviani in \cite{CO}. The conjecture has recently been proved in \cite{CO2}.
    
    Taking into account that the Hessian curve is the ramification divisor associated with the polar map,
    we perform a study of the problem using a geometric description of the graph of such a map.

\end{abstract}
\maketitle

\section{Introduction}
Hessian varieties associated with algebraic projective hypersurfaces are a very classical and rich topic, which is still an active area of research. For instance, the geometric properties of such varieties are, in general, far from being completely understood; results in this direction are given, for instance, in 
\cite{H},
\cite{DvG}, \cite{bfp} and \cite{bfp2}. 

 In this paper we investigate the question posed by C. Ciliberto and G. Ottaviani in \cite[Introduction, Question (ii), and Remark 6.4]{CO}, regarding the injectivity of the Hessian map for ternary forms.
 Specifically, given a projective class $[f] \in \mP (\mC [x_0,x_1,x_2])_d$ of a homogeneous polynomial $f$ of degree $d \ge 3$, we consider the rational map given by
 $$
    h_{d,2}: \mP(\mC [x_0,x_1,x_2]_d) \dasharrow \mP (\mC [x_0,x_1,x_2]_{3(d-2)}), \quad h_{d,2}([f])=[{\rm hess}_f], 
    $$
 where ${\rm hess}_f$ is the determinant of the Hessian matrix $M_f$ of $f$.

The indeterminacy locus of $h_{d,2}$ is the locus of polynomials with vanishing Hessian, which by Hesse Theorem coincides with the {\it cone locus}, that is polynomial classes corresponding to unions of concurrent lines. 

The case $d=3$ has been settled in \cite[Theorem 4.7]{CO}, where the authors prove that the Hessian map $h_{3,2}$ is dominant and generically $3:1$. The case $d \ge 4$ has been proved in \cite{CO2}.

 In this paper we deal with such a question.
 Our approach is geometric, and consists in analyzing the surface given by the graph in $\mP^2 \times \mP^2$ of the polar map of a smooth curve. 
 Such a map is finite of degree $(d-1)^2$, and the Hessian curve corresponds to the ramification divisor. By writing the numerical classes of the graph and of the ramification divisor on the graph, we can study the geometry of pairs of ramification divisors of some polar maps, which project to the same Hessian curve. If the two divisors coincide, we prove that the gradients of the two polynomials are proportional, hence by Euler formula the same holds for the starting polynomials (see Lemma \ref{lem: same ramification}). If the ramification divisors don't coincide, we consider a suitable ruled surface determined by them and, under the assumptions that such a surface is a product, we conclude that the two gradients are projectively equivalent, so that the two polar linear systems coincide. The case in which the ruled surface is not a product remains open.

\vskip 4mm
\medskip \noindent {\bf Acknowledgement.}
The author is grateful to K. Ranestad and A. Dimca for pointing out a mistake in a previous version. The author thanks also C. Ciliberto and G. Ottaviani for useful comments.

\section{Notation and Preliminaries}

Throughout the paper we shall indicate by
$$
T:= \mC [x_0,x_1,x_2].
$$
Given a homogeneous polynomial $f\in T_d$ of degree $d \ge 1$, we will denote by $V(f) \subset \mP^2$ the projective zero locus of $f$.
Moreover, with $[f]$ we will indicate the projective class of $f$ and by $\partial_i f$
the partial derivative 
$$
\partial_i f =\frac {\partial f}{\partial x_i}
$$

It is classically known that for a smooth curve the partial derivatives of $f$ are linearly independent and the polar map
$$
\nabla f : \mP^2 \to \mP (\langle \partial_0 f, \partial_1 f,\partial_2 f \rangle)
$$
$$
\nabla f (P) = [\partial_0 f (P) \partial_0 f + \partial_1 f(P) \partial_1 f + \partial_2 f(P) \partial_2 f]
$$
is a finite morphism of degree 
$(d-1)^2$, see for instance \cite[Section 1.2]{Dolgachev}.
The ramification divisor is given by the Hessian curve $H_f=V({\rm hess}_f)$ of degree $3(d-2)$, where ${\rm hess}_f$ is the determinant of the Hessian matrix, which we shall call $M_f$,
and the branch divisor $B_f$ has degree $3(d-1)(d-2)$.

Moreover, if $f$ is general enough, the Hessian curve is smooth and irreducible (see, for instance, \cite [p. 183]{EC}).
It is well known that if $V(f) \subset \mP^2$ is singular, then the singular locus of $H_f$ contains such points; hence the smoothness assumption on $H_f$ implies the smoothness of $V(f)$.

Finally, given $f\in T_d$, we shall denote its {\it polar linear system} by
$$
\Lambda_f := \langle \partial_0 f, \partial _1 f, \partial_2 f\rangle \subset \mP (T_{d-1}). 
$$
\section{Geometry of the polar map}
In this section we shall describe the geometry of the polar map of $f(x_0,x_1,x_2)\in T_d$, with $V(f)$ nonsingular and $H_f$ irreducible and smooth, through its graph. By fixing the isomorphism
$$
\varphi_f : \mP (\langle \partial_0 f, \partial_1 f,\partial_2 f \rangle) \to\mP^2, \qquad \varphi_f 
(a \partial_0 f+b \partial_1 f+ c\partial_2 f )=(a:b:c),
$$
we identify the polar map with
$$
\nabla f : \mP^2 \to \mP^2, \quad \nabla f(P) =
(\de_0f(P):\de_1 f(P):\de_2 f(P)).
$$
Then the graph $S_f$ is given by
$$
S_f =\{ (P, \nabla f (P)) \ | \ P \in \mP^2\}\subset \mP^2 \times \mP^2.
$$

The surface $S_f$ is irreducible and nonsingular, and it is isomorphic to $\mP^2$ via the restriction 
${p_1}_{|S_f}$ of the first projection $p_1 : \mP^2 \times \mP^2 \to \mP^2$. The equations of $S_f$ are given by the order $2$ minors of the following matrix:
\begin{equation}\label{eq: equations of Sf}
    S_f =\left \{(x_0:x_1:x_2)(y_0:y_1:y_2) \in \mP^2 \times \mP^2 \ | \ {\rm rk} 
    \left( \begin{array}{ccc}
    y_0 & y_1 & y_2 \\
    \partial_0 f & \partial_1 f & \partial_2 f \\
    \end{array}
    \right)
    =1 \right \},
\end{equation}
where $\partial _j f := \frac{\partial f}{\partial x_j}(x_0,x_1,x_2)$.

We shall denote by 
$$
  \rho_f:={p_2}_{|S_f}: S_f \to \mP^2 
  $$
  the degree $(d-1)^2$ finite morphism which lifts the polar map, and by $R_f \subset S_f$ the ramification divisor of $\rho_f$.

In what follows we shall determine the numerical classes of $S_f$ and of $R_f$.

We first set some notation. Let $A(\mP^2 \times \mP^2)$ be the Chow ring of $\mP^2 \times \mP^2$. By choosing $L_1$ and $L_2$ as generators of the Picard groups of the two factors, and by setting $p_i:\mP^2 \times \mP^2 \to \mP^2$ to be the two projections, we have that the two divisors
$h_1=p_1^\star L_1$ and $h_2=p_2^\star L_2$
are generators for $A(\mP^2 \times \mP^2)$. The following relations hold:
$$
h_1^3=0=h_2^3, \quad h_1^2 \cdot h_2^2 =1.
$$

\begin{prop}
    Let $V(f) \subset \mP^2$ be a smooth curve. Then the 
    graph $S_f \subset \mP^2 \times \mP^2$ of $\nabla f : \mP^2 \to \mP^2$ is a smooth surface with class
   \begin{equation}\label{eq: class of the graph}
S_f \equiv (d-1)^2 \ h_1^2 +(d-1) h_1 h_2 + h_2^2.
\end{equation}
Moreover, the ramification divisor $R_{f}$ satisfies
\begin{equation}\label{eq: ramification class}
R_{f} \equiv 3 (d-2) h_1 \ h_2^2 + 3(d-1)(d-2)h_1^2 \ h_2.
\end{equation}
\end{prop}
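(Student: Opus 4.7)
The plan is to reduce both claims to intersection computations in $A(\mP^2 \times \mP^2)$, exploiting that the first projection restricts to an isomorphism $p_1|_{S_f}\colon S_f \xrightarrow{\sim} \mP^2$.

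First, I would establish that $S_f$ is smooth and isomorphic to $\mP^2$. Since $V(f)$ is smooth, the Euler identity $d\, f = \sum_i x_i \de_i f$ together with the Jacobian criterion forces $\de_0 f,\de_1 f,\de_2 f$ to have no common zero, so the polar map $\nabla f$ is a genuine morphism on $\mP^2$. The graph morphism $\gamma\colon \mP^2 \to \mP^2\times\mP^2$, $x\mapsto(x,\nabla f(x))$, is then a closed embedding with image $S_f$, giving $S_f \cong \mP^2$. Under this identification the hyperplane class $L$ on $S_f$ corresponds to $h_1|_{S_f}$, while $h_2|_{S_f} = (\nabla f)^* \sO_{\mP^2}(1) = (d-1)\,L$ since the polar map is defined by forms of degree $d-1$.

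Next, I would write $[S_f] = a\, h_1^2 + b\, h_1 h_2 + c\, h_2^2 \in A^2(\mP^2\times\mP^2)$ and determine the coefficients by pairing with the complementary monomials, using $h_1^3 = h_2^3 = 0$ and $h_1^2 h_2^2 = 1$. The coefficient $c = [S_f]\cdot h_1^2$ counts the points of $S_f$ in a generic fiber of $p_1$, giving $1$. The coefficient $a = [S_f]\cdot h_2^2$ equals the degree of $\rho_f$, namely $(d-1)^2$. The coefficient $b = [S_f]\cdot h_1 h_2$ counts points $x$ on a generic line $\ell_1\subset\mP^2$ whose image $\nabla f(x)$ lies on a second generic line $\ell_2$; the latter condition cuts out a curve of degree $d-1$ in $\mP^2$ (a linear combination of the $\de_i f$), which meets $\ell_1$ in $d-1$ points. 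This yields the asserted $[S_f]$.

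Finally, for $R_f$ I would apply Riemann--Hurwitz to the finite morphism $\rho_f\colon S_f \to \mP^2$:
\begin{equation*}
R_f \equiv K_{S_f} - \rho_f^* K_{\mP^2}.
\end{equation*}
Through $S_f \cong \mP^2$ this reads $R_f \equiv -3L + 3(d-1)L = 3(d-2)\, h_1|_{S_f}$ as a divisor class on $S_f$. To promote this to a class in $A^3(\mP^2\times\mP^2)$ I write $[R_f] = \alpha\, h_1 h_2^2 + \beta\, h_1^2 h_2$ and recover the coefficients by intersecting with $h_1$ and $h_2$ respectively; by the projection formula $\alpha = R_f \cdot h_1|_{S_f} = 3(d-2)$ and $\beta = R_f \cdot h_2|_{S_f} = 3(d-2)(d-1)$, which matches the claim. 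I expect no real obstacle in this program; once the isomorphism $p_1|_{S_f}\colon S_f \xrightarrow{\sim} \mP^2$ is in place every step reduces to a one-line enumerative count or a direct application of Riemann--Hurwitz and the projection formula. The only mild bookkeeping concern is keeping track of which classes live on $S_f$ versus on the ambient fourfold when pushing forward.
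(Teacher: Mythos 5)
Your proposal is correct and follows essentially the same route as the paper: determine the class of $S_f$ by pairing with complementary monomials (using the graph isomorphism via $p_1$ and the degree $(d-1)^2$ of $\rho_f$), then obtain $R_f$ from the relative canonical class $K_{S_f}-\rho_f^{*}K_{\mP^2}$. The only cosmetic differences are that you evaluate the ramification class on $S_f\cong\mP^2$ as $3(d-2)\,h_1|_{S_f}$ and then recover the ambient coefficients by intersecting with $h_1$ and $h_2$, while the paper multiplies $3(h_2-h_1)$ directly against the class of $S_f$ in $A(\mP^2\times\mP^2)$, and that you compute the mixed coefficient by a direct enumerative count rather than the projection formula.
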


\begin{proof}
    Since $S_f$ is a codimension two cycle, its class can be written in the form
  $$
  S_f \equiv \alpha \ h_1^2 +\beta h_1 \ h_2 + \gamma h_2^2,
  $$  
  for some coefficients $\alpha, \beta, \gamma \in \mZ$. Being a graph, it is isomorphic to $\mP^2$ via the first projection $p_1$; in particular, $S_f\cdot h_1^2=1$, so
  $\gamma =1$. Moreover we have 
  $$
  \rho_f={p_2}_{|S_f}= \nabla f \circ {p_1}_{|S_f},
  $$
  so $S_f \cdot h_2^2=(d-1)^2 =\alpha$ is the degree of $\rho_f$. Finally, by the projection formula, we have 
  $$
  \beta =S_f  \cdot h_1 h_2=
  S_f  \cdot h_1 \cdot \rho_f^\star L_2={\rho_f}_\star (S_f \cdot h_1) \cdot L_2=
  $$
  $$
  = \nabla f_\star L_1 \cdot L_2 = d-1.
  $$
  Next we compute the ramification class, which is given by the relative canonical divisor:
  $$
  \begin{array}{rl}
  R_f & \sim K_{S_f}- \rho_f^\star K_{\mP^2}\\
   & \\
  & \sim (-3h_1)\cdot S_f -(-3h_2) \cdot S_f\\
   & \\
  & = -3(d-1)
h_1^2 \cdot h_2 -3 h_1 h_2^2+3(d-1)^2 h_1^2 \cdot h_2 +3(d-1)h_1 \cdot h_2^2\\
 & \\
& = 3(d-2)h_1 h_2^2 + 3(d-1)(d-2)h_1^2 h_2.
\end{array}
$$

\end{proof}

\begin{remark}
    In particular, the Hessian curve satisfies
$$
H_f={p_1}_\star R_f \sim 3(d-2)L_1,
$$
and the branch divisor $B_f$ satisfies
$$
B_f= {p_2}_\star R_f\sim 3(d-1)(d-2)L_2.
$$
\end{remark}

\begin{remark}
    Another lifting of the Hessian curve is given by the {\it Steinerian curve} $\Gamma_f
    \subset \mP^2 \times \mP^2$, defined by the equations $M_f \cdot \left ( \begin{array}{c}
    y_0 \\ y_1 \\ y_2\\ \end{array}\right)$, where $M_f$ is the Hessian matrix of $f$. Such a curve 
    is complete intersection of three divisors of class
    $(d-2)h_1 +h_2$, so its class is
    $$
    \Gamma_f \equiv 3(d-2)^2 h_1^2 h_2 + 3(d-2)h_1 h_2^2,
    $$
so it is not related to the ramification curve $R_f$. 

We note that some authors define the Steinerian curve as ${p_2}_\star \Gamma_f \subset \mP^2$.
\end{remark}
\section{Hessians and polar linear systems}

In order to analyze pairs of polynomials having the same Hessian curve, we shall
compare the graphs of their polar maps. To this aim, given
$V(f)$ and $V(g)$ two smooth curves of degree $d \ge 4$ ,
we fix the isomorphisms
$$
\varphi_f : \mP (\langle \partial_0 f, \partial_1 f,\partial_2 f \rangle) \to\mP^2, \varphi_f 
(a \partial_0 f+b \partial_1 f+ c\partial_2 f )=(a:b:c)
$$
$$
\varphi_g : \mP (\langle \partial_0 g, \partial_1 g,\partial_2 g \rangle) \to\mP^2, \varphi_g 
(a \partial_0 g+b \partial_1 g+ c\partial_2 g)=(a:b:c),
$$
 so that the two polar maps correspond to
$$
\nabla f , \ \nabla g : \mP^2 \to \mP^2, 
$$
$$
\nabla f(P) = 
(\de_0 f(P):\de_1 f(P):\de_2 f(P)), \ \nabla g(P) =(\de_0 g(P):\de_1 g(P): \de_2 g(P)).
$$
We shall denote by $S_f$ and $S_g$ the graphs of $\nabla f$ and $\nabla g$, respectively, and by $R_f$ and $R_g$ the corresponding ramification divisors. 

\begin{lem}\label{lem: same ramification}
    Assume that $V(f)\subset \mP^2$ and $V(g)\subset \mP^2$ are degree $d \ge 4$ smooth curves such that
    $$
    R_f =R_g.
    $$
    Then $V(g)=V(f)$.
    
\end{lem}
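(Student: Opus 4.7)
The plan is to extract pointwise information from the equality $R_f=R_g$, promote it to a global polynomial identity that forces proportionality of gradients, and then close by Euler's formula.

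First I would use the fact that the first projection ${p_1}_{|S_f}$ is an isomorphism, so every point of $R_f$ has the form $(P,\nabla f(P))$ with $P\in H_f={p_1}_\star R_f$; the analogous statement holds for $R_g$. From $R_f=R_g$ I therefore obtain $H_f=H_g$ and, for every $P\in H_f$, the equality
$$
\nabla f(P)=\nabla g(P)\qquad \text{in } \mP^2,
$$
where both sides are computed in the coordinates fixed via $\varphi_f$ and $\varphi_g$.

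Next I would translate this pointwise agreement into a polynomial statement. The three $2\times 2$ minors
$$
m_{ij}:=\partial_i f\,\partial_j g-\partial_j f\,\partial_i g,\qquad 0\le i<j\le 2,
$$
are homogeneous of degree $2(d-1)$, and by the previous step they vanish on the irreducible curve $H_f$ of degree $3(d-2)$. Hence $m_{ij}=q_{ij}\,\mathrm{hess}_f$ for some homogeneous $q_{ij}$ of degree $4-d$. For $d\ge 5$ this degree is negative, so $m_{ij}\equiv 0$ directly. The borderline case $d=4$, in which $q_{ij}=c_{ij}\in\mC$ is only a scalar, is where I expect the main difficulty to lie; to handle it I would exploit the Koszul-type syzygy
$$
m_{12}\,\partial_0 f-m_{02}\,\partial_1 f+m_{01}\,\partial_2 f\equiv 0,
$$
which is a direct algebraic consequence of the definition of the $m_{ij}$. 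Substituting $m_{ij}=c_{ij}\,\mathrm{hess}_f$ and cancelling the nonzero factor $\mathrm{hess}_f$ produces a linear relation among $\partial_0 f,\partial_1 f,\partial_2 f$; by the smoothness of $V(f)$ these three partials are linearly independent (as recalled in Section 2), which forces $c_{01}=c_{02}=c_{12}=0$ and hence $m_{ij}\equiv 0$ also when $d=4$.

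Finally, from $m_{ij}\equiv 0$ for all $i<j$ it follows that the rows $(\partial_0 f,\partial_1 f,\partial_2 f)$ and $(\partial_0 g,\partial_1 g,\partial_2 g)$ are proportional, so there is a rational function $\lambda$ with $\partial_i g=\lambda\,\partial_i f$ for each $i$. Since $\partial_i f$ and $\partial_i g$ are homogeneous of the same degree $d-1$, the factor $\lambda$ must be a nonzero constant. Euler's formula then yields
$$
d\,g=\sum_i x_i\,\partial_i g=\lambda\sum_i x_i\,\partial_i f=\lambda\,d\,f,
$$
so $g=\lambda f$ and $V(g)=V(f)$, as claimed.
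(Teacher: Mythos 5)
Your argument follows the same route as the paper's proof: from $R_f=R_g$ you deduce $\nabla f(P)=\nabla g(P)$ for every $P\in H_f$, conclude that the $2\times 2$ minors $m_{ij}$ of the matrix of the two gradients vanish along $H_f$, kill them by comparing degrees, and finish with proportionality of the gradients plus Euler's formula. The one genuine difference is your treatment of $d=4$: the paper asserts $\deg H_f=3(d-2)>2d-2$ for all $d\ge 4$, but at $d=4$ both degrees equal $6$, so the degree comparison alone only gives $m_{ij}=c_{ij}\,\mathrm{hess}_f$ with $c_{ij}\in\mC$. Your Koszul identity $m_{12}\,\partial_0 f-m_{02}\,\partial_1 f+m_{01}\,\partial_2 f\equiv 0$, combined with $\mathrm{hess}_f\not\equiv 0$ and the linear independence of the partials of the smooth curve $V(f)$, correctly forces $c_{ij}=0$, so your proof closes a borderline case that the paper's strict inequality glosses over; for $d\ge 5$ the two arguments coincide. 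Two minor remarks: you call $H_f$ irreducible, which is neither assumed in the lemma nor needed (the paper explicitly notes this); and the passage from set-theoretic vanishing on $H_f$ to the factorization $m_{ij}=q_{ij}\,\mathrm{hess}_f$ of full degree uses that $\mathrm{hess}_f$ is reduced, a point left implicit in the paper's ``common component'' step as well, so it is a shared caveat rather than a defect of your proposal. Apart from that, your proof is correct and, at $d=4$, more careful than the published one.
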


\begin{proof}
   By the assumption
   $R_f=R_g$, we have $(P,\nabla f(P))=(P,\nabla g(P)) \in \mP^2 \times \mP^2$ for any $P \in H_f$. Hence, for any $P \in H_f$, we have 
    $$
    {\rm rk}
    \left(
    \begin{array}{ccc}
    \partial _0 f (P) & \partial_1 f (P) &\partial_2 f (P) \\
    \partial_0 g (P) & \partial_1 g (P) &\partial_2 g (P) \\
    \end{array}
    \right)=1,
    $$
    so the order $2$ minors of the matrix 
    \begin{equation}\label{eq: matrix two gradients}
      \left(
    \begin{array}{ccc}
    \partial _0 f & \partial_1 f  &\partial_2 f  \\
    \partial_0 g  & \partial_1 g  &\partial_2 g \\
    \end{array}
    \right)  
    \end{equation}
   have $H_f$ as a common component. As such minors have degree $2d-2$ and $\deg H_f =3d-6 > 2d-2$ if $d \ge 4$, they are identically zero. It follows that $\nabla g = \lambda \nabla f$ for a suitable nonzero scalar $\lambda \in \mC$, and by the Euler identity we have $g =\lambda f$.
\end{proof}

Observe that the irreducibility of $H_f$ is not needed in the previous proof.

Next we will consider the case in which $R_f \neq R_g$.
We shall need the following result.

\begin{prop}\label{prop: two gradient map}
 Let $f,g\in T_d$ be two polynomials, with $V(f)\subset \mP^2$ smooth and $V(g)$ not a set of concurrent lines, and such that the three minors of the matrix
 \[
 \left(
    \begin{array}{ccc}
    \partial _0 f & \partial_1 f  &\partial_2 f  \\
    \partial_0 g  & \partial_1 g  &\partial_2 g \\
    \end{array}
    \right)  
 \]
 are linearly independent.

 Then the rational map $\nu_{f,g} : \mP^2 \dasharrow \mP^2$ defined by such a net, namely
 \[
 \nu_{f,g}(x_0,x_1,x_2)=  (\partial_0 f \, \partial_1 g - \partial_1 f \, \partial _0 g: \partial_0 f \, \partial_2 g - \partial_2 f \, \partial _0 g:\partial_1 f \, \partial_2 g - \partial_2 f \, \partial _1 g) ,
 \]
 is generically finite of degree $\le (d-1)^2$, and any contracted curve has degree $\le d-1$.
\end{prop}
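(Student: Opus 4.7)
The plan is to realize $\nu_{f,g}$ as the composition of the product gradient map $(\nabla f,\nabla g):\mP^2\to\mP^2\times\mP^2$ with the rational map
\[
\mu:\mP^2\times\mP^2\dasharrow\mP^2,\quad \mu\bigl((a_0{:}a_1{:}a_2),(b_0{:}b_1{:}b_2)\bigr)=(a_0b_1-a_1b_0:a_0b_2-a_2b_0:a_1b_2-a_2b_1),
\]
which sends an ordered pair of distinct points of $\mP^2$ to the Plücker coordinates of the line they span. The indeterminacy locus of $\mu$ is the diagonal $\Delta\subset\mP^2\times\mP^2$, and the fibre of $\mu$ over a point $q\in\mP^2$ corresponding by projective duality to the line $\ell_q\subset\mP^2$ is $(\ell_q\times\ell_q)\setminus\Delta_{\ell_q}$.

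From the factorization $\nu_{f,g}=\mu\circ(\nabla f,\nabla g)$, for a general $q\in\mP^2$ with corresponding line $\ell_q=V(\alpha y_0+\beta y_1+\gamma y_2)$ one obtains
\[
\nu_{f,g}^{-1}(q)=(\nabla f)^{-1}(\ell_q)\cap(\nabla g)^{-1}(\ell_q),
\]
where $(\nabla f)^{-1}(\ell_q)=V(\alpha\partial_0f+\beta\partial_1f+\gamma\partial_2f)$ is a member of the polar net $|\Lambda_f|$. The smoothness of $V(f)$ ensures that the three partials of $f$ are linearly independent, so this is a proper plane curve of degree $d-1$; by Hesse's theorem, the hypothesis that $V(g)$ is not a set of concurrent lines makes $\partial_0g,\partial_1g,\partial_2g$ linearly independent as well, and $(\nabla g)^{-1}(\ell_q)$ is likewise a proper plane curve of degree $d-1$.

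With this fibre description, both the degree bound and the contracted-curve bound are immediate. Provided the two polar curves share no common component for general $\ell_q$, Bézout's theorem in $\mP^2$ gives $\#\nu_{f,g}^{-1}(q)\leq(d-1)^2$. Similarly, any irreducible curve $C\subset\mP^2$ contracted by $\nu_{f,g}$ to a point $q$ satisfies $C\subset\nu_{f,g}^{-1}(q)\subset(\nabla f)^{-1}(\ell_q)$, giving $\deg C\leq d-1$.

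It remains to establish generic finiteness, equivalently that the image of $\nu_{f,g}$ is two-dimensional. The hypothesis of linear independence of the three minors directly excludes the image from lying on a line in the target $\mP^2$: any such relation $\alpha c_{01}+\beta c_{02}+\gamma c_{12}=0$ holding identically would translate into a nontrivial $\mC$-linear relation among the minors, which is precisely ruled out. The main obstacle is the remaining \emph{envelope case}, in which the image would be a plane curve of degree at least two. My plan is to attack it by computing the cohomology class of the image surface $\Sigma=\overline{(\nabla f,\nabla g)(\mP^2)}\subset\mP^2\times\mP^2$ in $A^2(\mP^2\times\mP^2)$, in the spirit of the previous section, and then use the intersection number $\Sigma\cdot(h_1h_2)$ to show that $\Sigma\cap(\ell\times\ell)$ is a proper $0$-cycle for generic $\ell$, which forces $\mu|_\Sigma$ to be dominant and generically finite and confirms the sharp bound on the degree of $\nu_{f,g}$.
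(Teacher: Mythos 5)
Your fiber description and the two bounds are, in substance, the paper's own argument: the paper observes that $\nu_{f,g}^{-1}(Q)$ lies on the two curves $q_0\partial_0 f+q_1\partial_1 f+q_2\partial_2 f=0$ and $q_0\partial_0 g+q_1\partial_1 g+q_2\partial_2 g=0$ (your $(\nabla f)^{-1}(\ell_q)$ and $(\nabla g)^{-1}(\ell_q)$), notes both are nontrivial of degree $d-1$ because neither curve is a cone, and then reads off the Bézout bound $(d-1)^2$ and the bound $d-1$ for contracted curves. Up to small imprecisions on your side (the fiber is only \emph{contained} in that intersection, since points with $\nabla f(P)=\nabla g(P)$ are indeterminacy points; and your ``provided the two polar curves share no common component'' caveat is left unverified, though a common component only feeds the ``contains a curve of degree $\le d-1$'' alternative), this part is fine and coincides with the paper.

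The genuine gap is generic finiteness: you explicitly leave the ``envelope case'' (image a nondegenerate curve of the dual plane) open and offer only a plan, so the first assertion of the proposition is not proved. Worse, the plan as described would not close it: the class of $\Sigma=\overline{(\nabla f,\nabla g)(\mP^2)}$ in $A^2(\mP^2\times\mP^2)$ controls intersections with a \emph{general} cycle $\ell_1\times\ell_2$ representing $h_1h_2$, but the diagonal products $\ell\times\ell$ form only a $2$-dimensional subfamily of the $4$-dimensional family of such cycles, so the number $\Sigma\cdot h_1h_2$ cannot force $\Sigma\cap(\ell\times\ell)$ to be a proper $0$-cycle for general $\ell$; finiteness of that special intersection is precisely equivalent to the generic finiteness you are trying to establish, so the proposed intersection-theoretic step is circular in effect. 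To finish you would need an actual argument that the general fiber is finite — for instance, ruling out that for $q$ moving along a curve in the target the corresponding members of the two polar nets always share a component — or else follow the paper, which settles for the dichotomy ``every fiber is either $0$-dimensional of degree $\le(d-1)^2$ or contains a curve of degree $\le d-1$'' and concludes from there (the paper itself is terse on why the second alternative cannot hold generically, but it does not defer the point to an unproved plan).
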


\begin{proof}
By construction over any $Q=(q_0:q_1:q_2)\in \mP^2$, the fiber $\nu_{f,g}^{-1} (Q)$ is contained in the locus of equations
\[
\left\{
\begin{array}{ll}
q_0\partial _0 f +q_1 \partial_1 f  +q_2\partial_2 f & =0\\
q_0\partial _0 g +q_1 \partial_1 g  +q_2\partial_2 g & =0.\\
\end{array}
\right.
\]
As neither $V(f)$ nor $V(g)$ consist of concurrent lines, their partials are linearly independent, so far any $Q\in \mP^2$, the two equations are non-trivial. As a consequence,
a fiber consists either of a $0$-dimensional scheme of degree $ (d-1)^2$, or contains a curve of degree $\le d-1$ and a possible $0$-dimensional residual scheme of degree $< (d-1)^2$.
\end{proof}
\begin{prop}\label{thm: curves with same Hessian}
    Assume that $V(f)$ is a degree $d \ge 4$ curve with smooth Hessian $H_f$, and let $V(g)$ be a degree $d$ curve having the same Hessian curve: 
    $$
    H_f=H_g.
    $$
    In $\mP^2 \times \mP^2$ consider the complete intersection surface
    \begin{equation}\label{eq: zfg'}
    Z_{f,g}: \qquad 
   \left\{
    \begin{array}{l}
    {\rm hess} f =0 \\
      \\
     \det \left(
    \begin{array}{ccc}
    y_0 & y_1 & y_2 \\
    \partial _0 f & \partial_1 f &\partial_2 f \\
    \partial_0 g & \partial_1 g &\partial_2 g \\
    \end{array} 
    \right)=0,
        \end{array}
    \right.
    \end{equation}
    and let $\zfg'$ be the irreducible ruled surface, residual to the possible vertical components of type $p_1^\star P$, with $P\in H_f$.
    
    If $\zfg'\cong H_f \times \mP^1$ is a product, then
$$
    \langle \partial_0 f, \partial_1 f, \partial_2 f \rangle = \langle \partial_0 g, \partial_1 g, \partial_2 g \rangle \subset \mP(T_{d-1}).
    $$
\end{prop}
\begin{proof}
Since $H_g$ is smooth, this holds also for $V(g)$, so it is not a cone and its polar linear system is a net. By the assumption on the Hessians,
    we have
    $$
    H_f={p_1}_\star R_f={p_1}_\star R_g,
    $$
    and since $p_1$ is an isomorphism both when restricted to $S_f$ and $S_g$, we have that
    $R_g$ is smooth and irreducible too. Moreover, both
    the ramification curves are contained in the divisor ${p_1}^\star H_f \sim 3(d-2)h_1$ of equation ${\rm hess} f=0$.

    We next observe that the surface $Z_{f,g}$ is the closure of the union of lines spanned by the pairs of points $R_f \cdot p_1^\star P$ and $R_g \cdot p_1^\star P$, for $P \in H_f$. Indeed, the coordinates in $\mP^2 \times \mP^2$ of the two points $R_f \cap p_1^\star P$ and $R_g\cap p_1^\star P$ are  
    $$
    R_f \cap p_1^\star P=(P, \nabla f(P)), \quad R_g\cap p_1^\star P
    =(P, \nabla g(P)).
    $$
    Moreover we have $(P, \nabla f(P))=(P, \nabla g(P))$ for some $P \in H_f$ if and only if $(P, \nabla f(P)) \in R_f \cap R_g$.
    
    Hence the line joining the two points $\nabla f(P)$ and $\nabla g(P)$ in $p_1^\star P = \mP^2$ is given by 
    $$
    \det \left(
    \begin{array}{ccc}
    y_0 & y_1 & y_2 \\
    \partial _0 f (P) & \partial_1 f (P) &\partial_2 f (P) \\
    \partial_0 g (P) & \partial_1 g (P) &\partial_2 g (P) \\
    \end{array}
    \right)=0.
    $$
    
By the assumption $\zfg'$ is a product $H_f \times \mP^1$, we have that the three order two minors of the matrix
\[
\left(
    \begin{array}{ccc}
    \partial _0 f & \partial_1 f &\partial_2 f \\
    \partial_0 g & \partial_1 g &\partial_2 g \\
    \end{array} 
    \right)
\]
are proportional along $H_f$. Then the map $\nu_{f,g}$ considered in Proposition
\ref{prop: two gradient map} contracts $H_f$, so the three minors are linearly dependent.
Let
\[
a_0 (\partial_0 f \, \partial_1 g - \partial_1 f \, \partial _0 g) 
+ a_1 (\partial_0 f \, \partial_2 g - \partial_2 f \, \partial _0 g) 
+ a_2 (\partial_1 f \, \partial_2 g - \partial_2 f \, \partial _1 g) =0
\]
be a non trivial relation. This gives the following Jacobian syzygy of degree $2(d-1)$ for $f$:
\[
(a_0 \partial_1 g +a_1 \partial_2 g)\partial_0 f -(a_0\partial _0 g -a_2 \partial _2 g)
\partial_1 f -(a_1 \partial _0 g +a_2 \partial _1 g) \partial_2 f=0.
\]
Finally, since $V(f)$ is smooth, the three partials $\partial_0 f, \partial_1f$ and $\partial_2 f$ form a regular sequence, so the syzygy module of the Jacobian ideal is generated by the Koszul relations. In particular we have
\[
((a_0 \partial_1 g +a_1 \partial_2 g), -(a_0\partial _0 g -a_2 \partial _2 g), -(a_1 \partial _0 g +a_2 \partial _1 g) )=\]
\[
\qquad =b_0 (-\partial_1 f, \partial _0 f, 0) +b_1
(-\partial_2 f, 0, \partial_0 f) +b_2 (0, -\partial_2 f, \partial_1 f)
\]
for suitable scalars $b_0, b_1, b_2 \in \mC$. This implies that $\partial_0g, \partial_1 g,
\partial_2 g$ are linear combinations of the partials of $f$, and the statement follows.
\end{proof}

\subsection{Forms with equal polar linear systems}

Homogeneous forms with the same polar linear system have been classified, up to a suitable projective equivalence, by C. Mammana in
\cite{M}, with the following result:
\begin{thm}\label{thm: Mammana}
  Let $V(f) \subset \mP^n_\mC$ be a hypersurface of degree $d$, and denote by $M_f$ its Hessian matrix. If $V(f)$ is a not a cone, it has a polar linear system
  $\Lambda_f$ satisfying 
  $$
  \Lambda_f = \Lambda_g
  $$
  for some $V(g)\subset \mP^n_\mC$
  with $V(g) \neq V(f)$ if and only if $V(f)$ is projectively equivalent, via a matrix $A$ such that
  $A \cdot M_f$ is symmetric,
  to a hypersurface with equation of the type
  \begin{equation}\label{eq: Sebastiani Thom polynomials}
  f(x_0,\dots, x_n) = f_1 (x_0, \dots, x_h) + f_2 (x_{h+1}, \dots, x_n)
  \end{equation}
  for some $0 \le h \le n-1$,
  or
  \begin{equation}\label{eq: second type}
  f(x_0, \dots, x_n)=
 x_0 \partial_{h+1} \alpha  +
  x_1 \partial_{h+2} \alpha  + \dots + x_h \partial_{2h+1} \alpha + \beta,
  \end{equation}
  where $\alpha \in \mC[x_{h+1}, \dots, x_{2h+1}]_d $ and $\beta \in \mC[x_{h+1}, \dots, x_{n}]_d$, \  for some $0 \le h \le (n-1)/2$.
\end{thm}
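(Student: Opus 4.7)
The plan is to translate the polar system equality into a linear algebra statement about an intertwining matrix, and then classify the outcome via its Jordan normal form. First I would observe that $\Lambda_f=\Lambda_g$ is equivalent to the existence of an invertible matrix $A\in\GL_{n+1}(\mC)$ with $\nabla g = A\cdot\nabla f$. Differentiating this identity once more and using $\partial_i\partial_j=\partial_j\partial_i$ yields the integrability constraint
\[
A M_f = (A M_f)^T,
\]
i.e.\ $A M_f$ is symmetric, which accounts for the clause on $A$ in the statement. Since $V(g)\neq V(f)$, Euler's identity shows that $A$ is not a scalar multiple of the identity. Under a linear change of coordinates $x\mapsto Bx$, the matrix $A$ is conjugated to $B^T A (B^T)^{-1}$, so up to projective equivalence we may place $A$ in Jordan normal form, and by replacing $g$ with $g-\lambda f$ we can shift the spectrum by any constant.

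The argument then branches on the Jordan structure of $A$. In the case that $A$ has at least two distinct eigenvalues, I would split the variables into two groups $(x_0,\ldots,x_h)$ and $(x_{h+1},\ldots,x_n)$ adapted to the corresponding generalized eigenspaces. In block form $A=\mathrm{diag}(A_1,A_2)$ and $M_f=\bigl(\begin{smallmatrix}M_{11}&M_{12}\\M_{12}^T&M_{22}\end{smallmatrix}\bigr)$; the symmetry of $A M_f$ forces $A_1 M_{12}=M_{12}A_2^T$, and since $A_1,A_2$ have disjoint spectra, the Sylvester equation forces $M_{12}=0$. The Hessian is then block-diagonal, and by integration $f$ splits as a sum $f_1(x_0,\ldots,x_h)+f_2(x_{h+1},\ldots,x_n)$, producing (\ref{eq: Sebastiani Thom polynomials}).

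In the remaining case where $A$ has a single eigenvalue, subtracting a multiple of $f$ from $g$ reduces to $A=N$ a nonzero nilpotent matrix in Jordan form. The relation $NM_f=(NM_f)^T$ translates, within each Jordan block of size $r$, into a Hankel-type condition on the corresponding block of $M_f$, which in particular forces its lowest diagonal entries to vanish. Integrating these constraints shows that $f$ must be linear in the ``top'' coordinate of each Jordan block, and packaging the remaining Taylor data produces a form $\alpha$ on the complementary variables paired against the linear coordinates, together with a residual term $\beta$ depending only on the kernel directions; after relabeling, this is exactly (\ref{eq: second type}), and the bound $h\le(n-1)/2$ records the fact that each linear variable must be paired with a distinct partial of $\alpha$. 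The converse in both cases is a direct verification: take $g=f_1-f_2$ in the first case, or permute the index pairings between $x_0,\ldots,x_h$ and the partials of $\alpha$ in the second.

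The main obstacle I anticipate is the nilpotent analysis, where extracting the explicit normal form (\ref{eq: second type}) from the Hankel symmetry conditions -- especially when $N$ has several Jordan blocks of different sizes, whose Hankel constraints are coupled across blocks -- requires a careful integration of partial derivatives and a judicious choice of adapted coordinates. This is presumably the technical heart of Mammana's original argument, and the book-keeping needed to identify precisely which variables play the role of $x_0,\ldots,x_h$ versus those appearing in $\alpha$ and $\beta$ is where most of the work lies.
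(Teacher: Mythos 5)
A preliminary remark: the paper itself gives no proof of this statement — it is quoted as Mammana's classification from \cite{M} — so there is no internal argument to compare yours with; I can only judge the proposal on its own merits. Your reduction is the natural one and its first half is sound: $\Lambda_f=\Lambda_g$ with $V(f)$ not a cone is equivalent to $\nabla g=A\nabla f$ with $A\in\GL_{n+1}(\mC)$, symmetry of $M_g=AM_f$ gives the intertwining condition, Euler's identity shows $A$ is non-scalar, $A$ transforms by conjugation under linear changes of coordinates, and in the case of two distinct eigenvalues the Sylvester-equation argument correctly kills the off-diagonal Hessian block and splits $f$ as in \eqref{eq: Sebastiani Thom polynomials}.

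The gap is that the single-eigenvalue case — which is exactly where the normal form \eqref{eq: second type} has to be produced — is asserted rather than proved, and the assertions as phrased are not accurate. Note that a form of type \eqref{eq: second type} naturally admits the witness $g=f+\alpha$, i.e.\ $\nabla g=(I+N)\nabla f$ with $N^2=0$; so when $N$ has a Jordan block of size $\ge 3$ you must show that $f$ is nevertheless projectively equivalent to such a form, and this does not follow from the Jordan-adapted coordinates alone. For instance, with $n=2$ and $N$ a single block of size $3$, the symmetry of $NM_f$ gives $\partial_2^2f=\partial_1\partial_2f=0$ and $\partial_1^2f=\partial_0\partial_2f$, whence $f=cx_0^{d-1}x_2+\tfrac{c(d-1)}{2}x_0^{d-2}x_1^2+ax_0^{d-1}x_1+bx_0^d$: here $f$ is linear in the top coordinate but the remaining data is \emph{not} ``$\alpha$ in complementary variables plus $\beta$ in the kernel directions'' — the quadratic $x_1$-part is locked to the $x_2$-coefficient, and one needs a further, non-obvious projective change (not a relabelling of the Jordan basis) to land in \eqref{eq: second type} with $h=0$; in particular $h$ is not read off the Jordan type of $N$. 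With several blocks of different sizes the cross-block symmetry constraints must also be integrated, and you explicitly defer this, so the proof is missing precisely at its technical core. Finally, your converse for type \eqref{eq: second type} is wrong: permuting the pairings between $x_0,\dots,x_h$ and $\partial_{h+1}\alpha,\dots,\partial_{2h+1}\alpha$ does not in general preserve the polar system (already for $h=1$, $g=x_0\partial_3\alpha+x_1\partial_2\alpha+\beta$ has $\partial_2g\notin\Lambda_f$ for generic $\alpha$); the correct witness is $g=f+\alpha$, which is where the condition $\alpha\neq 0$ (guaranteed since otherwise $f$ would be a cone) enters.
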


\begin{remark}
    The polynomials of the type \eqref{eq: Sebastiani Thom polynomials}
 are called of {\it Sebastiani - Thom type} in \cite{Z}. Observe that such forms have a reducible Hessian hypersurface, and that the polynomials of type \eqref{eq: second type} for $n=2$ correspond to curves having a singular point of multiplicity $\ge d-1$.
 
 Moreover, as observed in \cite{ZErratum}, they don't form a closed family. However, their closure is a proper subset. Indeed, by
 the result of J. Carlson and Ph. Griffiths, see \cite [Section 4, (b)]{CG}, a general polynomial is uniquely determined by its Jacobian ideal.

 We note that the cone locus is contained in both closures of curves projectively equivalent to type \eqref{eq: Sebastiani Thom polynomials} and to type \eqref{eq: second type}.

 \end{remark}


\end{document}